\newcommand\blfootnote[1]{%
  \begingroup
  \renewcommand\thefootnote{}\footnote{#1}%
  \addtocounter{footnote}{-1}%
  \endgroup
}
\newtheorem{theorem}{Theorem}%[section]
\newtheorem{lemma}[theorem]{Lemma}
\theoremstyle{definition}
\title{Partisan gerrymandering with geographically compact districts}
\author{Boris Alexeev\qquad Dustin~G.~Mixon\footnote{Department of Mathematics, The Ohio State University, Columbus, OH}}
\date{}
\begin{document}
\maketitle

\blfootnote{Send correspondence to \texttt{mixon.23@osu.edu}}

\begin{abstract}
Bizarrely shaped voting districts are frequently lambasted as likely instances of gerrymandering.
In order to systematically identify such instances, researchers have devised several tests for so-called geographic compactness (i.e., shape niceness).
We demonstrate that under certain conditions, a party can gerrymander a competitive state into geographically compact districts to win an average of over 70\% of the districts.
Our results suggest that geometric features alone may fail to adequately combat partisan gerrymandering.
\end{abstract}

\section{Introduction}

Gerrymandering is the manipulation of voting district boundaries to obtain political advantage.
The term was coined in 1812 by the Boston Gazette, which likened the contorted shape of a Massachusetts voting district to the profile of a salamander.
Ever since, voting districts with bizarre shapes have been criticized as likely instances of gerrymandering (for example, see~\cite{Ingraham:14}).
The U.S.\ Supreme Court is currently deliberating over whether a non-geometric feature should be used to detect partisan gerrymandering.
Instead of using geometry to detect apparent boundary manipulation, the proposal uses recent election data to detect apparent political advantage; the proposal quantifies this advantage using the so-called efficiency gap~\cite{StephanopoulosM:15,BernsteinD:17}.

In this paper, we show that under a certain conditions, a party can gerrymander a competitive state into nicely shaped districts and still manage to win an average of over 70\% of the districts; see Figure~\ref{fig.example} for a real-world instance of this phenomenon.
This suggests that geometric features may be insufficient to adequately diagnose partisan gerrymandering, meaning additional non-geometric features such as efficiency gap may be necessary to do the job.

To formalize the notion of shape niceness, researchers have devised several methods to quantify so-called geographic compactness, and they can be roughly classified into three different types~\cite{Duchin:17}:
\begin{itemize}
\item
\textbf{Isoperimetry.}
Intuitively, a gerrymandered district will spend much of its perimeter selectively including and excluding various portions of a map.
One could quantify this waste by simply measuring the perimeter.
A scale-invariant alternative is the Polsby--Popper score, given by the ratio between the area of the district and the square of its perimeter~\cite{PolsbyR:91}.
\item
\textbf{Convexity.}
Congressional districts are confined to state borders, which often exhibit jagged portions due to geographic features such as rivers.
These features then induce long district perimeters, and so the perimeter fails to capture the geometric waste due to gerrymandering.
Alternatively, one may compare the area of the district to its convex hull, or to the area of the smallest disk containing the district (as in the Reock score~\cite{Roeck:61}).
\item
\textbf{Dispersion.}
Another common feature among gerrymandered districts is \textit{sprawl}.
This is quantified by computing the average distance between pairs of points in the district, or by computing the district's moment of inertia.
In particular, given a Lebesgue measurable district $D\subseteq\mathbb{R}^2$, the centroid and moment of inertia are given by
\[
\mu_D=\frac{1}{|D|}\iint_D x~d\operatorname{Leb}(x),
\qquad
I_D=\iint_D \|x-\mu_D\|^2d\operatorname{Leb}(x).
\]
\end{itemize}

In the following section, we introduce a model for voter locations and preferences.
In this setting, we show that using a line to split a circular state into two equal populations produces districts with optimal geographic compactness, in the sense of isoperimetry, convexity and dispersion simultaneously.
We then report our main result:
Under our model of voter locations and preferences, one may split a circular state into two such districts, winning an average of
\[
\frac{3}{4}-\frac{1}{2(1+e^\pi)}
\approx 73\%
\]
of the districts (in the limit as the number of voters goes to infinity).
The proof of this result involves passing from random walks to Brownian motion, and then manipulating instances of Brownian motion to compute exact probabilities.
Section~3 provides the proof of the main result, and Section~4 contains various technical lemmas.

\begin{figure}
\centering
\includegraphics[width=0.49\textwidth]{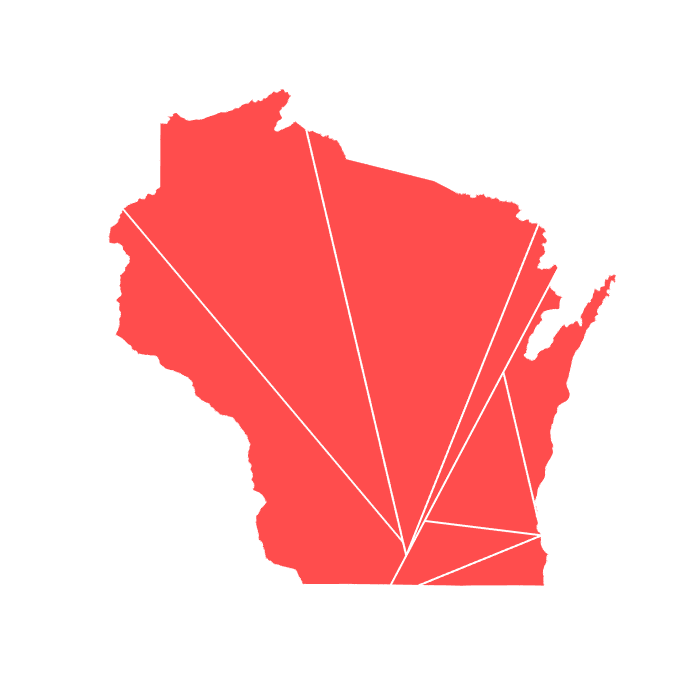}
\includegraphics[width=0.49\textwidth]{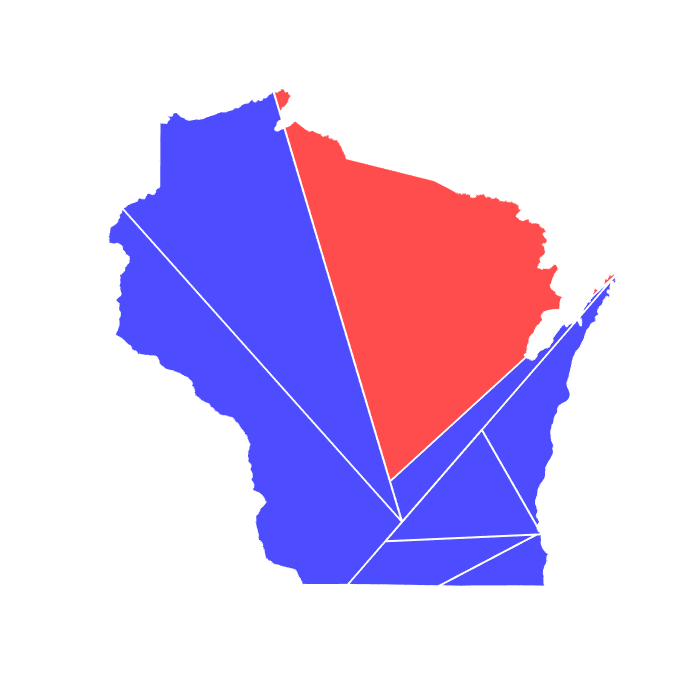}
\caption{\label{fig.example}
\footnotesize{Partitions of Wisconsin into 8 notional voting districts.
Each partition was obtained by selecting lines (``split-lines'') to iteratively split regions into two nearly equal populations.
For the left-hand partition, these lines were selected so as to maximize the resulting number of majority-Republican districts, whereas majority-Democrat districts were encouraged for the partition on the right.
Here, we applied the 2016 presidential election returns~\cite{Wisconsin:data} as a proxy for the spatial distribution of Republicans and Democrats.
Wisconsin was particularly competitive in this election, with Trump and Clinton receiving 1,405,284 and 1,382,536 votes, respectively.
Under this proxy, Republicans can win all 8 of the districts using split-lines, while Democrats can achieve 7 out of 8 (the most possible since they lost overall).
Our main result demonstrates that for a certain model of voter locations and preferences, one may use split-lines to gerrymander a competitive state and win an average of over 70\% of the allotted districts.}}
\end{figure}

\section{The model and main result}

In order to analyze the effectiveness of partisan gerrymandering with geographically compact districts, we need a model for voter locations and preferences.
For this, we introduce the \textbf{$(k,n)$-stochastic voter circle model}, which takes $kn$ voters at distinct locations of a circle who cast independent random votes uniformly from $\{\pm1\}$.
(Here, ``circle'' refers to the 1-dimensional boundary of a disk.)
Given this election data, a partisan mapmaker then partitions the circle into $k$ districts so as to maximize the number of majority-positive districts.
In this model, we enforce an extreme version of ``one person, one vote''~\cite{Smith:14} by requiring each district to contain exactly $n$ voters, and we enforce ``geographic compactness'' by requiring each district to be a contiguous portion of the circle.
We refer to any such partition that maximizes the number of majority-positive districts as an \textbf{optimal partisan gerrymander}.

We will devote our attention to the special case where $k=2$.
This case brings a nice interpretation in which voter locations enjoy a more general configuration in the plane:
Take $n$ voter locations that, together with their centroid, are in general position, and reflect these locations about the centroid to obtain the other $n$ voter locations.
One may project these $2n$ locations onto the unit circle centered at the centroid, and identify permissible partitions of the circle with partitions of the plane into two convex districts that evenly divide the voters.
Such partitions of the plane are arguably the most geographically compact possible:

\begin{theorem}[Optimal geographic compactness]
Partition a closed disk $D$ of unit radius into two regions $A,B$ whose closures are homeomorphic to $D$.
Then
\[
\max\Big\{|\partial A|,|\partial B|\Big\}\geq \pi+2,
\qquad
\min\bigg\{\frac{|A|}{|\operatorname{hull}(A)|},\frac{|B|}{|\operatorname{hull}(B)|}\bigg\}\leq 1,
\qquad
I_A+I_B\geq\frac{\pi}{2}-\frac{16}{9\pi}.
\]
Equality is simultaneously achieved in all three when $A$ and $B$ are complementary half-disks.
\end{theorem}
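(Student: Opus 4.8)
The three inequalities are essentially independent, so the plan is to dispatch each in turn and then verify the equality claim by direct substitution. The convexity bound is immediate: a bounded set is contained in its convex hull, so $|A|\le|\operatorname{hull}(A)|$ and $|B|\le|\operatorname{hull}(B)|$, hence $\min\{|A|/|\operatorname{hull}(A)|,\,|B|/|\operatorname{hull}(B)|\}\le 1$, with equality exactly when $A$ and $B$ are both convex (as they are for complementary half-disks).

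For the isoperimetric bound I would first use the topological hypotheses to argue that the interface $\Gamma:=\bar A\cap\bar B$ is a rectifiable Jordan arc whose endpoints $p,q$ are two distinct points of $\partial D$ and whose relative interior lies in $\operatorname{int} D$: if $\Gamma$ contained a Jordan curve, or met $\partial D$ outside $\{p,q\}$, then one of $\bar A,\bar B$ would fail to be homeomorphic to $D$, and if $\Gamma$ is non-rectifiable then $\max\{|\partial A|,|\partial B|\}=\infty$ and the bound is trivial. The points $p,q$ cut $\partial D$ into arcs of lengths $\beta$ and $2\pi-\beta$ with $\beta\in[0,\pi]$, so $\{|\partial A|,|\partial B|\}=\{\,|\Gamma|+\beta,\ |\Gamma|+2\pi-\beta\,\}$ and the larger perimeter is at least $2\pi-\beta+|\Gamma|\ge 2\pi-\beta+|pq|=2\pi-\beta+2\sin(\beta/2)$. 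Since $\tfrac{d}{d\beta}\big(2\pi-\beta+2\sin(\beta/2)\big)=\cos(\beta/2)-1\le 0$ on $[0,\pi]$, this expression is minimized at $\beta=\pi$ with value $\pi+2$, and equality forces $p,q$ antipodal and $\Gamma$ equal to the diameter $pq$.

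The dispersion bound is the heart of the argument. Place the origin at the center of $D$, so that $\iint_D x\,d\operatorname{Leb}=0$ and $\iint_D\|x\|^2\,d\operatorname{Leb}=\pi/2$, and set $m:=\iint_A x\,d\operatorname{Leb}=-\iint_B x\,d\operatorname{Leb}$. Expanding each moment of inertia about the origin (using $\mu_A=m/|A|$ and $\mu_B=-m/|B|$) gives the identity
\[
I_A+I_B=\frac{\pi}{2}-\frac{\pi\,\|m\|^2}{|A|\,|B|},
\]
so it suffices to show $\|m\|^2\le\tfrac{16\,|A|\,|B|}{9\pi^2}$. Assuming $m\ne 0$ and writing $u:=m/\|m\|$, we have $\|m\|=\iint_A(u\cdot x)\,d\operatorname{Leb}$, and by the bathtub principle (checked at once by comparing with the symmetric difference) this is maximized, among all measurable subsets of $D$ of area $a:=|A|$, by the circular segment $S_t:=\{x\in D:u\cdot x\ge t\}$, where $t\in[-1,1]$ is chosen so that $|S_t|=a$, i.e.\ $\arccos t-t\sqrt{1-t^2}=a$. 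A routine integration gives $\iint_{S_t}(u\cdot x)\,d\operatorname{Leb}=\tfrac23(1-t^2)^{3/2}$, hence $\|m\|^2\le\tfrac49(1-t^2)^3$. (Since $S_t$ is convex, this argument never uses the topological hypothesis; in fact the dispersion and convexity bounds hold for every measurable partition.) It therefore remains to prove the one-variable inequality
\[
(1-t^2)^3\ \le\ \frac{4}{\pi^2}\,a\,(\pi-a),\qquad a=\arccos t-t\sqrt{1-t^2},\quad t\in[-1,1],
\]
which is tight precisely at $t=0$, the case $A$ a half-disk.

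I expect this last inequality to be the main obstacle. Using the $t\mapsto-t$ symmetry we may take $t\ge 0$; then completing the square $a(\pi-a)=\tfrac{\pi^2}{4}-(a-\tfrac{\pi}{2})^2$, substituting $t=\sin\psi$ with $\psi\in[0,\tfrac{\pi}{2}]$, and taking square roots turns the inequality into
\[
\psi+\sin\psi\cos\psi\ \le\ \frac{\pi}{2}\,\sin\psi\,\sqrt{1+\cos^2\psi+\cos^4\psi},\qquad\psi\in\Big[0,\tfrac{\pi}{2}\Big].
\]
Both sides vanish at $\psi=0$ (with unequal slopes there, so that end is easy), and both equal $\tfrac{\pi}{2}$ at $\psi=\tfrac{\pi}{2}$, where they moreover agree to first order; a rigorous proof will need a careful second-order analysis near $\psi=\tfrac{\pi}{2}$ together with a coarser estimate on the rest of the interval (for instance after dividing through by $\sin\psi$ and squaring). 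With the inequality in hand, I would conclude by substituting complementary half-disks into all three quantities — $|\partial A|=|\partial B|=\pi+2$, both hull ratios equal $1$, and $I_A=I_B=\tfrac{\pi}{4}-\tfrac{8}{9\pi}$, so $I_A+I_B=\tfrac{\pi}{2}-\tfrac{16}{9\pi}$ — confirming that equality holds simultaneously in all three.
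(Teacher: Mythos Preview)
Your convexity and isoperimetry arguments coincide with the paper's; the paper spends a little more care on the topology (a Jordan-curve argument that $\partial A\cap\partial D$ and $\partial B\cap\partial D$ are connected arcs), but the analytic step is the same optimization of $2\pi-\beta+2\sin(\beta/2)$.

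For the dispersion bound your route is genuinely different. The paper does one Lloyd step: replace $(A,B)$ by the Voronoi cells of $(\mu_A,\mu_B)$, which only decreases $I_A+I_B$ and reduces the problem to chord partitions $\{x\le z\}\cup\{x>z\}$; it then shows $I(z)+I(-z)\ge 2I(0)$ by differentiating and controlling the sign with an elementary center-of-mass lemma. You instead derive the identity $I_A+I_B=\tfrac{\pi}{2}-\pi\|m\|^2/(|A|\,|B|)$ and invoke the bathtub principle to bound $\|m\|$ among sets of prescribed area, which also lands on chord partitions (your segments $S_t$). So both reductions hit the same one-parameter family, and your residual inequality $(1-t^2)^3\le\tfrac{4}{\pi^2}a(\pi-a)$ is equivalent to the paper's $I(z)+I(-z)\ge 2I(0)$. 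What the paper's route buys is a clean finish: its derivative factorization and center-of-mass estimates dispatch the one-variable problem without any endpoint asymptotics. Your identity is elegant and the bathtub step is natural, but the final inequality is left as a sketch; note in particular that at $\psi=\tfrac{\pi}{2}$ the two sides agree to \emph{second} order (both derivatives vanish) and differ only at third order---your expansion gives LHS $=\tfrac{\pi}{2}-\tfrac{2}{3}\epsilon^3+O(\epsilon^5)$ versus RHS $=\tfrac{\pi}{2}+O(\epsilon^4)$---so the promised ``second-order analysis'' would need to be upgraded. Adapting the paper's derivative-sign argument to your parametrization would close the gap cleanly.
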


\begin{proof}
First, each point in $\partial D$ must lie in $\partial A \cup \partial B$.
If $\partial D\subseteq\partial A$ or $\partial D\subseteq\partial B$, then
\[
\max\Big\{|\partial A|,|\partial B|\Big\}
\geq|\partial D|
=2\pi
\geq \pi+2.
\]
Otherwise, $\partial A\cap\partial D$ and $\partial B\cap\partial D$ are both nonempty, and we claim they are both connected.

To see this, suppose to the contrary that there exist $a_1,a_2\in\partial A\cap\partial D$ and $b_1,b_2\in\partial B\cap\partial D$, all distinct, arranged in counter-clockwise order as $a_1$, $b_1$, $a_2$, $b_2$.
Then since the closure $\overline{A}$ is homeomorphic to $D$, there exists a path $P$ from $a_1$ to $a_2$ whose interior points lie in the interior $A^\circ$.
One may also draw a path from $a_2$ to $a_1$ by extending radially to the concentric circle of radius $2$, and then orbiting along this circle before descending to $a_1$.
Combined, these two paths produce a simple closed curve $C$ that separates $b_1$ from $b_2$.
Since $\overline{B}$ is homeomorphic to $D$, there exists a path $Q$ from $b_1$ to $b_2$ whose interior points lie in $B^\circ$.
The Jordan curve theorem then gives that an interior point $q$ of $Q$ lies in $C$, and furthermore, $q\in B^\circ\subseteq D$ implies that $q\in C\cap D$, i.e., $q$ is also an interior point of $P$.
Overall, $q\in A^\circ\cap B^\circ$, violating the assumption that $A$ and $B$ are disjoint.

At this point, we know that $C_A=\partial A\cap\partial D$ and $C_B=\partial B\cap\partial D$ are nonempty and connected.
Let $x,y$ denote the endpoints of $C_A$ (also, of $C_B$).
Since $\overline{A},\overline{B}$ are homeomorphic to $D$, there exists a path $R$ from $x$ to $y$ such that $\partial A=C_A\cup R$ and $\partial B=C_B\cup R$.
As such,
\[
\max\Big\{|\partial A|,|\partial B|\Big\}
\geq\max\Big\{|C_A|,|C_B|\Big\}+|R|
\geq\max\Big\{|C_A|,2\pi-|C_A|\Big\}+\|x-y\|.
\]
Without loss of generality, we may take $x=(\cos\theta,\sin\theta)$ and $y=(\cos\theta,-\sin\theta)$, and so
\[
\max\Big\{|\partial A|,|\partial B|\Big\}
\geq\max\Big\{|C_A|,2\pi-|C_A|\Big\}+\|x-y\|
=\max\Big\{2\theta,2\pi-2\theta\Big\}+2\sin\theta.
\]
The derivative of the right-hand side is negative for $\theta\in(0,\pi/2)$ and positive for $\theta\in(\pi/2,\pi)$, meaning the right-hand side is minimized by $\theta=\pi/2$, thereby producing the desired bound.
In addition, the perimeter of a half-disk is $\pi+2$, achieving equality in this bound.

For the second bound, we note that every $A\subseteq D$ satisfies $A\subseteq\operatorname{hull}(A)$, and so $|A|\leq|\operatorname{hull}(A)|$ with equality when $A$ is convex, for example, when $A$ is a half-disk.

For the last bound, let $\mu_A$ and $\mu_B$ denote the centroids of $A$ and $B$.
If $\mu_A=\mu_B$, then
\[
I_A+I_B
=\iint_D\|x-\mu_A\|^2d\operatorname{Leb}(x)
\geq\iint_D\|x\|^2d\operatorname{Leb}(x)
=\frac{\pi}{2}
\geq\frac{\pi}{2}-\frac{16}{9\pi},
\]
where the first inequality follows from the fact that the moment of inertia is minimal about the center of mass.
Now suppose $\mu_A\neq\mu_B$.
Then we may define $A'$ and $B'$ as an alternative partition of $D$ obtained from the perpendicular bisector of $\mu_A$ and $\mu_B$.
($A'$ and $B'$ are known as the Voronoi regions of $\mu_A$ and $\mu_B$, respectively.)
Let $\mu_{A'}$ and $\mu_{B'}$ denote the centroids of $A'$ and $B'$.
Then
\begin{align*}
I_A+I_B
&=\iint_D\Big(\mathbf{1}_A(x)\|x-\mu_A\|^2+\mathbf{1}_B(x)\|x-\mu_B\|^2\Big)d\operatorname{Leb}(x)\\
&\geq\iint_D\Big(\mathbf{1}_{A'}(x)\|x-\mu_A\|^2+\mathbf{1}_{B'}(x)\|x-\mu_B\|^2\Big)d\operatorname{Leb}(x)\\
&\geq\iint_D\Big(\mathbf{1}_{A'}(x)\|x-\mu_{A'}\|^2+\mathbf{1}_{B'}(x)\|x-\mu_{B'}\|^2\Big)d\operatorname{Leb}(x)
=I_{A'}+I_{B'},
\end{align*}
where the inequalities follow from comparing integrands pointwise.
As such, we may restrict our attention to partitions of $D$ that arise from a separating line.
Without loss of generality, $A$ and $B$ are of the form
\[
A=\Big\{(x,y)\in D:x\leq z\Big\},
\qquad
B=\Big\{(x,y)\in D: x>z\Big\}
\]
for some $z\in(-1,1)$.
Then $I_A+I_B$ is a function of $z$ that is minimized at $z=0$ (see Lemma~\ref{lem.moment of inertia} for details), in which case a bit of calculus gives $I_A+I_B=\frac{\pi}{2}-\frac{16}{9\pi}$.
\end{proof}

\begin{theorem}[Main result]
For $n\geq1$, let $D_n$ denote the random number of majority-positive districts in an optimal partisan gerrymander under the $(2,n)$-stochastic voter circle model.
Then
\[
\operatorname{Pr}(D_n=2)=\frac{1}{2}-\Theta\Big(\frac{1}{\sqrt{n}}\Big),
\qquad
\lim_{n\rightarrow\infty}\operatorname{Pr}(D_n=0)=\frac{1}{1+e^\pi}.
\]
\end{theorem}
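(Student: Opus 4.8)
The plan is to split the two displayed claims after one common combinatorial reduction. Index the $2n$ voters cyclically by $1,\dots,2n$ with signs $X_i\in\{\pm1\}$, put $Y_m=X_1+\dots+X_m$, $S=Y_{2n}$, and for the window of $n$ consecutive voters starting at $j$ let $\sigma_j$ be its vote-sum, so that complementary windows satisfy $\sigma_{j+n}=S-\sigma_j$. The key structural fact is that $\sigma_{j+1}-\sigma_j=X_{j+n}-X_j\in\{-2,0,2\}$, so $j\mapsto\sigma_j$ moves by at most $2$ per step around the cycle. A discrete intermediate-value argument (also tracking the parity of $\sigma_j$, which is that of $n$, together with the complement identity to see that both the region $\{\sigma_j\le 0\}$ and the region $\{\sigma_j\ge S\}$ are nonempty whenever neither equals everything) then shows that $D_n=2$ exactly when some $\sigma_j$ lies strictly between $0$ and $S$, which is equivalent to $S>0$ when $n$ is odd and to $S\ge4$ when $n$ is even; likewise $D_n=0$ exactly when every one of the $2n$ windows has non-positive sum, i.e.\ $\max_j\sigma_j\le0$.

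For the first claim, $\Pr(D_n=2)$ is now read off from the law of $S$, a sum of $2n$ i.i.d.\ signs: for $n$ odd it is $\Pr(S>0)=\tfrac12\bigl(1-\Pr(S=0)\bigr)$, and for $n$ even it is $\Pr(S\ge4)=\tfrac12\bigl(1-\Pr(S=0)\bigr)-\Pr(S=2)$. Since $\Pr(S=0)=\binom{2n}{n}2^{-2n}$ and $\Pr(S=2)=\binom{2n}{n+1}2^{-2n}$ are both $\Theta(n^{-1/2})$ by Stirling's formula, this gives $\Pr(D_n=2)=\tfrac12-\Theta(n^{-1/2})$.

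For the second claim I would first pass to the scaling limit: by Donsker's invariance principle the rescaled walk $t\mapsto(2n)^{-1/2}Y_{\lfloor2nt\rfloor}$ converges to a standard Brownian motion $W$ on $[0,1]$, and the event $\{\max_j\sigma_j\le0\}$ becomes $\{\,W(1)\le W(t+\tfrac12)-W(t)\le0\text{ for all }t\in[0,\tfrac12]\,\}$ (both inequalities appear because the $2n$ windows include every window together with its complement). Making this rigorous needs two standard inputs: that the supremum of the Gaussian process $t\mapsto W(t+\tfrac12)-W(t)$ has an atomless law at the value $0$, so the boundary of the event is Brownian-null; and a control of the difference between $\max_j\sigma_j$ and the continuous supremum via the modulus of continuity of $W$. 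Granting $\Pr(D_n=0)\to p$ for the Brownian probability $p$, I would compute $p$ by decomposing $W$ into its independent halves $P(s)=W(s)$ and $Q(s)=W(s+\tfrac12)-W(\tfrac12)$ on $[0,\tfrac12]$: since $Q-P$ and $Q+P$ are independent with $(Q+P)(\tfrac12)\sim N(0,1)$, rescaling $U:=Q-P$ to a standard Brownian motion $B$ on $[0,1]$ turns the event into $\xi\le2\underline B-B(1)$ and $\xi\le B(1)-2\overline B$, where $\underline B,\overline B$ are the running minimum and maximum of $B$ and $\xi:=(Q+P)(\tfrac12)$ is an independent $N(0,1)$; equivalently, conditioning on $\gamma:=B(1)$ makes $B-\gamma/2$ a Brownian bridge from $-\gamma/2$ to $\gamma/2$ and $p=\mathbb E\bigl[\Phi\bigl(-2\sup_{[0,1]}|B-\gamma/2|\bigr)\bigr]$. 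I would then invoke the two-barrier reflection formula: a Brownian bridge from $-c$ to $c$ stays inside $(-y,y)$ with probability $\phi(2c)^{-1}\sum_{n\in\mathbb Z}\bigl[\phi(2c-4ny)-\phi\bigl(2y(2n+1)\bigr)\bigr]$ (with $\phi$ the standard normal density). Integrating against the $N(0,1)$ law of $\gamma$ cancels the prefactor $\phi(2c)$, after which Poisson summation collapses the $\gamma$-integral to $\sum_{k\ge1}(-1)^{k-1}e^{-\pi^2k^2/(8v^2)}$; the surviving integral is the classical $\int_0^\infty e^{-2v^2-\pi^2k^2/(8v^2)}\,dv=\tfrac12\sqrt{\pi/2}\,e^{-\pi k}$, and the constants combine to give $p=\sum_{k\ge1}(-1)^{k-1}e^{-\pi k}=\frac{1}{1+e^\pi}$. (Combined with $\Pr(D_n=2)\to\tfrac12$, this also yields the advertised average fraction $\tfrac12\mathbb E[D_\infty]=\tfrac34-\tfrac1{2(1+e^\pi)}$.)

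The step I expect to be the real obstacle is not the final evaluation — which, once the reduction is set up, is a careful but routine combination of the reflection principle and Poisson summation — but the passage from the discrete model to Brownian motion: one must show that the discrete event $\{\max_j\sigma_j\le0\}$ and its Brownian counterpart carry the same limiting probability, which requires both the no-atom statement at the boundary value $0$ and a quantitative comparison of the finite maximum over the $2n$ windows with the continuous supremum.
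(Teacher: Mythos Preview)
Your proposal is correct, and the combinatorial reduction together with the Donsker passage to Brownian motion follows the paper's route essentially verbatim (the paper also isolates the continuity-set/no-atom verification as a separate lemma, exactly as you anticipate). The difference lies in how the Brownian probability is evaluated.

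The paper, after the same orthogonal change $(W_1(1),W_2(1))\mapsto(X,Z)$, does \emph{not} condition on the endpoint of $W_1+W_2$ and pass to a bridge. Instead it invokes the closed-form trivariate density of $(\min W,\max W,W(1))$ due to Choi--Roh, integrates out $Z$, changes variables to $(a,b)=\bigl(\tfrac{x+z}{2},\tfrac{x-z}{2}\bigr)$, and is left with a double integral over a quadrant of an image series. Each term is then handled separately: the ``$I_k$'' pieces assemble, via a unimodular linear substitution, into a half-plane Gaussian integral equal to $\pi/2$; the ``$J_k$'' pieces reduce to the explicit series $\sum_k(4k^2+4k+2)^{-1}$, summed by partial fractions and the digamma reflection formula to $\tfrac{\pi}{2}\tanh(\pi/2)$; and $\tfrac{1}{2\pi}\bigl(\tfrac{\pi}{2}-\tfrac{\pi}{2}\tanh\tfrac{\pi}{2}\bigr)=\tfrac{1}{1+e^\pi}$.

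Your route---condition on $\gamma=B(1)$, recognize a bridge in a symmetric strip, integrate out $\xi$ to get $\Phi(-2\sup|B-\gamma/2|)$, plug in the two-barrier image series, then apply the Jacobi/Poisson transformation to convert the image sum into $\sum_k(-1)^k e^{-\pi^2k^2/(8y^2)}$ and finish with $\int_0^\infty e^{-2y^2-\pi^2k^2/(8y^2)}\,dy=\tfrac12\sqrt{\pi/2}\,e^{-\pi k}$---is a genuinely different and arguably cleaner evaluation: it replaces the digamma identity by theta-function modularity. One small wording fix: what ``collapses'' the $\gamma$-integral is not Poisson summation but the tiling of $\mathbb{R}$ by the translates $[-2y-4ny,\,2y-4ny]$, which makes the first image series integrate to $1$; Poisson summation is then applied to the surviving sum $\sum_n e^{-2(2n+1)^2y^2}$ over $n$, after which the $k=0$ term cancels the ``$1$'' and the $k\ge1$ terms give $\sum_{k\ge1}(-1)^{k-1}e^{-\pi k}=\tfrac{1}{1+e^\pi}$. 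With that reorganization the computation goes through exactly as you outline.
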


What follows is a proof of the $D_n=2$ claim based on a discrete version of the intermediate value theorem.
Let $P$ denote the total number of positive votes.
We claim that $D_n=2$ precisely when $P\geq 2\lfloor n/2+1\rfloor$, that is, $P\geq n+1$ for $n$ odd and $P\geq n+2$ for $n$ even.
It suffices to prove this claim since
\[
\operatorname{Pr}(P\geq n+1)
=\frac{1}{2}-\frac{1}{2^{2n+1}}\binom{2n}{n},
\qquad
\operatorname{Pr}(P\geq n+2)
=\frac{1}{2}-\frac{1}{2^{2n+1}}\binom{2n}{n}-\frac{1}{2^{2n}}\binom{2n}{n+1},
\]
both of which equal $1/2-\Theta(1/\sqrt{n})$ by Stirling's approximation.
Since $\lfloor n/2+1\rfloor$ positive votes are required to carry a size-$n$ district, $P<2\lfloor n/2+1\rfloor$ implies $D_n<2$.
Now suppose $P\geq 2\lfloor n/2+1\rfloor$.
It is convenient to index the voter locations by $\mathbb{Z}/2n\mathbb{Z}$.
Let $P_i$ denote the random number of positive votes in $[i,i+n)$.
If $|P_i-P_{i+n}|\leq 1$ for some $i\in\mathbb{Z}/2n\mathbb{Z}$, then
\[
\min\{P_i,P_{i+n}\}
\geq\frac{P_i+P_{i+n}}{2}-\frac{1}{2}
=\frac{P}{2}-\frac{1}{2}
\geq \Big\lfloor\frac{n}{2}+1\Big\rfloor-\frac{1}{2}
>\Big\lfloor\frac{n}{2}\Big\rfloor,
\]
and so $D_n=2$.
As such, we are done if $|P_0-P_n|\leq1$.
Otherwise, $|P_0-P_n|\geq2$, and we may take $P_0-P_n\geq2$ without loss of generality.
Put $s_i:=P_i-P_{i+n}$ for $i\in\{0,\ldots,n\}$.
We have $s_0\geq2$, $s_n\leq -2$, and $s_{i+1}-s_i\in\{-2,0,2\}$ for each $i\in\{0,\ldots,n\}$.
Let $j$ be the smallest such that $s_j<2$.
Then $2>s_j=s_{j-1}-2\geq0$, i.e., $|P_j-P_{j+n}|=|s_j|\leq1$, and so we are done.

Our proof of the $D_n=0$ claim is longer, and can be found in the following section.
Note that we analyzed $D_n=2$ for every fixed $n$ and then applied Stirling's approximation to deduce the reported asymptotic behavior.
To analyze our $D_n=0$ claim, we relate the stochastic voter circle model to a random walk.
When suitably scaled, Donsker's invariance principle gives that the walk's distribution converges to that of standard Brownian motion, and so the desired limit can be written as the probability that standard Brownian motion lies in some event.
To compute this probability, we leverage facts about Brownian bridges that have no analog when working with random walks.
As such, it is not obvious how to modify our approach to analyze $D_n=0$ for a fixed $n$.
This is why we report a convergence rate for $\operatorname{Pr}(D_n=2)$ but not for $\operatorname{Pr}(D_n=0)$.

\section{Proof of the main result}

Draw independent votes $\{x_i\}_{i\in\mathbb{Z}/2n\mathbb{Z}}$ uniformly from $\{\pm1\}$.
We seek the probability that the sum over every interval of length $n$ is nonpositive.
Define the random walks
\[
A_i=\sum_{j=0}^{i-1} x_j,
\qquad
B_i=\sum_{j=0}^{i-1} x_{j+n},
\qquad
i\in\{0,\ldots,n\}.
\]
Observe that $A_0=B_0=0$.
Then the sums of votes over districts $[i,i+n)$ and $[i+n,i)$ are
\[
\sum_{j=i}^{i+n-1}x_j
=A_n-A_i+B_i,
\qquad
\sum_{j=i+n}^{i-1}x_j
=B_n-B_i+A_i,
\]
respectively.
As such, both sums are nonpositive for every $i\in\{0,\ldots,n\}$ precisely when
\[
A_n\leq A_i-B_i \leq -B_n
\qquad \forall i\in\{0,\ldots,n\}.
\]
Define rescaled random walks by
\[
W^{(n)}_1(t)=\frac{A_{\lfloor nt\rfloor}}{\sqrt{n}},
\qquad
W^{(n)}_2(t)=-\frac{B_{\lfloor nt\rfloor}}{\sqrt{n}},
\qquad
t\in[0,1],
\]
and let $W_1$ and $W_2$ denote independent instances of standard Brownian motion.
Then
\begin{align*}
&\lim_{n\rightarrow\infty}\operatorname{Pr}(D_n=0)\\
&=\lim_{n\rightarrow\infty}\operatorname{Pr}\Big(A_n\leq \min_{i\in\{0,\ldots,n\}}(A_i-B_i)\leq \max_{i\in\{0,\ldots,n\}}(A_i-B_i)\leq -B_n\Big)\\
&=\lim_{n\rightarrow\infty}\operatorname{Pr}\Big(W_1^{(n)}(1)\leq \min_{t\in[0,1]}(W_1^{(n)}(t)+W_2^{(n)}(t))\leq \max_{t\in[0,1]}(W_1^{(n)}(t)+W_2^{(n)}(t))\leq W_2^{(n)}(1)\Big)\\
&=\operatorname{Pr}\Big(W_1(1)\leq \min_{t\in[0,1]}(W_1(t)+W_2(t))\leq \max_{t\in[0,1]}(W_1(t)+W_2(t))\leq W_2(1)\Big),
\end{align*}
where the last step follows from Donsker's invariance principle (see the proof of Lemma~\ref{lem.donsker} for details).
Next, consider Brownian bridges $B_k(t)=W_k(t)-tW_k(1)$.
Then $(W_1(1),B_1(\cdot),W_2(1),B_2(\cdot))$ are independent, with $W_1(1)$ and $W_2(1)$ exhibiting standard normal distribution.
Put
\[
X=\frac{W_1(1)+W_2(1)}{\sqrt{2}},
\qquad
Z=\frac{W_1(1)-W_2(1)}{\sqrt{2}}.
\]
Then $X$ and $Z$ have standard normal distribution with $(X,Z,B_1(\cdot),B_2(\cdot))$ independent.
As such,
\[
W(t)=\frac{B_1(t)+B_2(t)}{\sqrt{2}}+tX,
\qquad
t\in[0,1]
\]
is an instance of standard Brownian motion that is independent of $Z$.
Put $L=\min_{t\in[0,1]}W(t)$ and $U=\max_{t\in[0,1]}W(t)$.
We continue our calculation:
\begin{align*}
\lim_{n\rightarrow\infty}\operatorname{Pr}(D_n=0)
&=\operatorname{Pr}\Big(W_1(1)\leq \min_{t\in[0,1]}(W_1(t)+W_2(t))\leq \max_{t\in[0,1]}(W_1(t)+W_2(t))\leq W_2(1)\Big)\\
&=\operatorname{Pr}\bigg(\frac{X+Z}{2}\leq L\leq U\leq \frac{X-Z}{2}\bigg)\\
&=\int_{-\infty}^{\infty}\operatorname{Pr}\bigg(\frac{X+z}{2}\leq L\leq U\leq \frac{X-z}{2}\bigg)\cdot\frac{1}{\sqrt{2\pi}}e^{-z^2/2}dz,
\end{align*}
where the last step conditions on $Z$.
Observe the main result in~\cite{ChoiR:13}:
\begin{equation*}
\label{eq.trivariate1}
\operatorname{Pr}\Big(a\leq L\leq U\leq b \wedge X\in dx\Big)
=f(a,b,x)dx,
\end{equation*}
where
\begin{equation*}
\label{eq.trivariate2}
f(a,b,x)=\left\{\begin{array}{ll}\displaystyle\frac{1}{\sqrt{2\pi}}\sum_{k=-\infty}^\infty\Big(e^{-(x-2k(b-a))^2/2}-e^{-(x-2b-2k(b-a))^2/2}\Big)&\text{if }a\leq 0\leq b\\
0&\text{otherwise.}
\end{array}
\right.
\end{equation*}
Since $f$ is continuous~\cite{ChoiR:13}, Lemma~\ref{lem.how to use trivariate} allows us to leverage this expression:
\begin{align*}
\lim_{n\rightarrow\infty}\operatorname{Pr}(D_n=0)
&=\int_{-\infty}^{\infty}\operatorname{Pr}\bigg(\frac{X+z}{2}\leq L\leq U\leq \frac{X-z}{2}\bigg)\cdot\frac{1}{\sqrt{2\pi}}e^{-z^2/2}dz\\
&=\int_{-\infty}^\infty\Bigg[\int_{-\infty}^\infty f\Big(\frac{x+z}{2},\frac{x-z}{2},x\Big)dx\Bigg]\cdot\frac{1}{\sqrt{2\pi}}e^{-z^2/2}dz\\
&=\int_{0}^\infty\int_{-\infty}^0 f(a,b,a+b)\cdot\frac{1}{\sqrt{2\pi}}e^{-(a-b)^2/2}\cdot 2dadb.
\end{align*}
We wish to convert the integral of the series to a series of integrals.
To this end, define
\[
I_k=\int_0^\infty\int_{-\infty}^0e^{-[(a+b-2k(b-a))^2+(a-b)^2]/2}dadb,
\quad
J_k=\int_0^\infty\int_{-\infty}^0e^{-[(a-b-2k(b-a))^2+(a-b)^2]/2}dadb.
\]
Then the triangle inequality gives
\[
\sum_{k=-\infty}^\infty\int_0^\infty\int_{-\infty}^0\Big|e^{-[(a+b-2k(b-a))^2+(a-b)^2]/2}-e^{-[(a-b-2k(b-a))^2+(a-b)^2]/2}\Big|dadb
\leq\sum_{k=-\infty}^\infty I_k+\sum_{k=-\infty}^\infty J_k,
\]
which is finite by Lemmas~\ref{lem.Ik} and~\ref{lem.Jk}.
As such, the Fubini--Tonelli theorem allows us to continue:
\begin{align*}
\lim_{n\rightarrow\infty}\operatorname{Pr}(D_n=0)
&=\int_{0}^\infty\int_{-\infty}^0 f(a,b,a+b)\cdot\frac{1}{\sqrt{2\pi}}e^{-(a-b)^2/2}\cdot 2dadb\\
&=\frac{1}{2\pi}\sum_{k=-\infty}^\infty\int_0^\infty\int_{-\infty}^0\Big(e^{-[(a+b-2k(b-a))^2+(a-b)^2]/2}-e^{-[(a-b-2k(b-a))^2+(a-b)^2]/2}\Big)dadb\\
&=\frac{1}{2\pi}\bigg(\sum_{k=-\infty}^\infty I_k-\sum_{k=-\infty}^\infty J_k\bigg)\\
&=\frac{1}{1+e^\pi},
\end{align*}
where the last step follows from Lemmas~\ref{lem.Ik} and~\ref{lem.Jk}.

\section{Lemmata}

\begin{lemma}
\label{lem.center of mass}
Given a continuous function $\rho\colon[a,b]\rightarrow[0,\infty)$, denote
\[
M=\int_a^b\rho(x)dx,
\qquad
\overline{x}=\frac{1}{M}\int_a^bx\rho(x) dx.
\]
\begin{itemize}
\item[(a)]
If $\rho(\frac{a+b}{2}+t)\geq\rho(\frac{a+b}{2}-t)$ for every $t\in[0,\frac{b-a}{2}]$, then $\overline{x}\geq\frac{a+b}{2}$.
\item[(b)]
If $\rho(\frac{a+b}{2}+t)\leq\rho(\frac{a+b}{2}-t)$ for every $t\in[0,\frac{b-a}{2}]$, then $\overline{x}\leq\frac{a+b}{2}$.
\end{itemize}
\end{lemma}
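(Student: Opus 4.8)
The plan is to reduce both parts to a single sign computation by centering the integral at the midpoint $m=\tfrac{a+b}{2}$. Set $L=\tfrac{b-a}{2}$ and substitute $x=m+t$, so that
\[
\overline{x}-m=\frac{1}{M}\int_a^b(x-m)\,\rho(x)\,dx=\frac{1}{M}\int_{-L}^{L}t\,\rho(m+t)\,dt.
\]
Here we may assume $M>0$: since $\rho$ is continuous and nonnegative, $M=0$ forces $\rho\equiv 0$, in which case $\overline{x}$ is undefined and there is nothing to prove. Continuity of $\rho$ also guarantees that every integral below exists, so no integrability issues arise.

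Next I would split the integral over $[-L,L]$ into its restrictions to $[0,L]$ and $[-L,0]$ and fold the second piece onto the first via the change of variables $t\mapsto -t$, which gives
\[
\int_{-L}^{0}t\,\rho(m+t)\,dt=-\int_0^{L}t\,\rho(m-t)\,dt,
\]
and hence
\[
\overline{x}-m=\frac{1}{M}\int_0^{L}t\,\bigl(\rho(m+t)-\rho(m-t)\bigr)\,dt.
\]
Now the hypothesis does all the work. Under (a), for every $t\in[0,L]$ we have $t\geq 0$ and $\rho(m+t)-\rho(m-t)\geq 0$, so the integrand is nonnegative and therefore $\overline{x}-m\geq 0$, i.e., $\overline{x}\geq\tfrac{a+b}{2}$. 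Under (b), the same integrand is everywhere nonpositive, so $\overline{x}-m\leq 0$, which is the claimed inequality.

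There is no real obstacle here; the only points requiring a moment's care are the degenerate case $M=0$ (dispatched above by vacuity) and applying the reflection change of variables on the correct half of the interval so that the two pieces combine into the single nonnegative-weight integral displayed above.
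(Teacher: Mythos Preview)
Your proof is correct. Both your argument and the paper's hinge on the same reflection about the midpoint $m=\tfrac{a+b}{2}$, but the executions differ slightly: the paper decomposes the density as $\rho=\rho_1+\rho_2$, where $\rho_1$ is the symmetric part (so its center of mass is exactly $m$) and $\rho_2$ is the nonnegative remainder supported on $[m,b]$ (so its center of mass is at least $m$), and then combines these as a convex combination. Your approach instead folds the domain, writing $\overline{x}-m=\tfrac{1}{M}\int_0^L t\bigl(\rho(m+t)-\rho(m-t)\bigr)\,dt$ and reading off the sign directly. The two are equivalent in spirit, but your version is more economical: it avoids introducing the auxiliary masses $M_1,M_2$ and the separate estimate for the $\rho_2$ piece, and it makes the role of the hypothesis transparent in a single line.
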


\begin{proof}
We will prove (a), and (b) follows by negating $x$.
Let $\rho_1$ denote the symmetric part of $\rho$:
\[
\rho_1(x)=\left\{\begin{array}{ll}\rho(x)&\text{if }x\leq\frac{a+b}{2}\\\rho(a+b-x)&\text{otherwise,}\end{array}\right.
\]
put $\rho_2(x)=\rho(x)-\rho_1(x)$, and denote
\[
M_1=\int_a^b\rho_1(x)dx,
\qquad
M_2=\int_a^b\rho_2(x)dx.
\]
Then the symmetry of $\rho_1$ and the fact that $\rho_2(x)=0$ for $x\leq\frac{a+b}{2}$ together give
\begin{align}
\overline{x}
=\frac{1}{M}\int_a^bx\rho(x) dx
\nonumber&=\frac{M_1}{M}\bigg(\frac{1}{M_1}\int_a^bx\rho_1(x) dx\bigg)+\frac{M_2}{M}\bigg(\frac{1}{M_2}\int_a^bx\rho_2(x) dx\bigg)\\
\label{eq.center of mass bound}&=\frac{M_1}{M}\bigg(\frac{a+b}{2}\bigg)+\frac{M_2}{M}\bigg(\frac{1}{M_2}\int_{\frac{a+b}{2}}^bx\rho_2(x) dx\bigg).
\end{align}
Finally, 
\[
\frac{1}{M_2}\int_{\frac{a+b}{2}}^bx\rho_2(x) dx
\geq\frac{1}{M_2}\int_{\frac{a+b}{2}}^b\bigg(\frac{a+b}{2}\bigg)\rho_2(x) dx
=\frac{a+b}{2}\cdot\frac{1}{M_2}\int_{a}^b\rho_2(x) dx
=\frac{a+b}{2}.
\]
Combining with \eqref{eq.center of mass bound} and observing $M=M_1+M_2$ then gives the result.
\end{proof}

\begin{lemma}
\label{lem.moment of inertia}
Consider the following quantities, defined for $z\in(-1,1]$:
\[
M(z)=\int_{-1}^z 2\sqrt{1-x^2}dx,
\qquad
\overline{x}(z)=\frac{1}{M(z)}\int_{-1}^z x\cdot 2\sqrt{1-x^2}dx,
\]
\[
I(z)=\int_{-1}^z\int_{-\sqrt{1-x^2}}^{\sqrt{1-x^2}}\Big(y^2+\big(x-\overline{x}(z)\big)^2\Big)dydx.
\]
Then $I(z)+I(-z)\geq 2I(0)$ for all $z\in(-1,1)$.
\end{lemma}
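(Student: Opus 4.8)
The plan is to show that $g(z) := I(z) + I(-z)$ is minimized at $z = 0$ by computing $g$ explicitly as a smooth function on $[0,1)$ and analyzing its derivative. First I would use the parallel axis theorem to split $I(z)$ into a "fixed" contribution and a "centroid-dependent" contribution. Writing $I_0(z) := \int_{-1}^z \int_{-\sqrt{1-x^2}}^{\sqrt{1-x^2}} (x^2 + y^2)\,dy\,dx$ for the moment of inertia of the left piece about the origin, one has
\[
I(z) = I_0(z) - M(z)\,\overline{x}(z)^2,
\]
since $\iint (x - \overline{x}(z))^2 = \iint x^2 - 2\overline{x}(z)\iint x + \overline{x}(z)^2 M(z)$ and $\iint x\,dy\,dx = M(z)\overline{x}(z)$. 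Both $M$ and $I_0$ are elementary: $M(z) = \int_{-1}^z 2\sqrt{1-x^2}\,dx$ and $I_0(z) = \int_{-1}^z 2\sqrt{1-x^2}(x^2 + \tfrac13(1-x^2))\,dx = \int_{-1}^z \tfrac23(1+2x^2)\sqrt{1-x^2}\,dx$, so $M(z) + M(-z) = \pi$ and $I_0(z) + I_0(-z) = \iint_D (x^2+y^2) = \pi/2$ are constants. Also $M(z)\overline{x}(z) = \int_{-1}^z 2x\sqrt{1-x^2}\,dx = -\tfrac23(1-z^2)^{3/2}$, which is clean.

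Therefore $g(z) = \tfrac{\pi}{2} - \big[M(z)\overline{x}(z)^2 + M(-z)\overline{x}(-z)^2\big]$, and minimizing $g$ is equivalent to maximizing
\[
h(z) := M(z)\,\overline{x}(z)^2 + M(-z)\,\overline{x}(-z)^2 = \frac{\big(M(z)\overline{x}(z)\big)^2}{M(z)} + \frac{\big(M(-z)\overline{x}(-z)\big)^2}{M(-z)} = \frac{4}{9}(1-z^2)^3\left(\frac{1}{M(z)} + \frac{1}{M(-z)}\right)
\]
on $[0,1)$, using $M(z)\overline{x}(z) = -M(-z)\overline{x}(-z) = -\tfrac23(1-z^2)^{3/2}$. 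Since $M(z) + M(-z) = \pi$ is fixed, the factor $\tfrac1{M(z)} + \tfrac1{M(-z)}$ is smallest exactly when $M(z) = M(-z)$, i.e. at $z = 0$ by monotonicity of $M$, while the factor $(1-z^2)^3$ is also maximized at $z = 0$; hence $h(z) \le h(0)$ for all $z$, giving $g(z) \ge g(0) = 2I(0)$. I should note $h$ and $g$ are even, so it suffices to argue on $[0,1)$, and the inequality is strict for $z \ne 0$ (though only "$\ge$" is claimed).

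The main obstacle is nothing deep — it is bookkeeping to confirm the closed forms $M(z)\overline{x}(z) = -\tfrac23(1-z^2)^{3/2}$ and the reduction $I(z) = I_0(z) - M(z)\overline{x}(z)^2$, and to make sure the endpoint behavior as $z \to \pm1$ causes no trouble (both $M$ and $\overline{x}$ stay finite there, and the degenerate factor $1/M(-z) \to \infty$ as $z\to 1$ is killed by $(1-z^2)^3 \to 0$, so $h \to 0$, consistent with the strict maximum at $0$). A small alternative to the AM-HM step: since $M(z) = \pi/2 + \phi(z)$ with $\phi$ odd and increasing, $\tfrac1{M(z)} + \tfrac1{M(-z)} = \tfrac{\pi}{(\pi/2)^2 - \phi(z)^2}$ is manifestly minimized at $\phi(z) = 0$. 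Either way the conclusion $I(z) + I(-z) \ge 2I(0)$ follows, which is exactly the claim invoked in the proof of the optimal-compactness theorem.
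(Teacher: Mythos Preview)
Your reduction is sharper than the paper's: both arrive at
\[
g(z)=I(z)+I(-z)=\frac{\pi}{2}-h(z),\qquad h(z)=M(z)\,\overline{x}(z)^2+M(-z)\,\overline{x}(-z)^2,
\]
but you go further and obtain the clean factorization
\[
h(z)=\frac{4}{9}(1-z^2)^3\Big(\frac{1}{M(z)}+\frac{1}{M(-z)}\Big),
\]
whereas the paper differentiates $g$ directly, factors $g'(z)=2\sqrt{1-z^2}\big(\overline{x}(z)+\overline{x}(-z)\big)\big(\overline{x}(z)-\overline{x}(-z)-2z\big)$, and then spends a separate center-of-mass lemma to control the signs of the two bracketed terms. (A small slip: $M(z)\overline{x}(z)$ and $M(-z)\overline{x}(-z)$ are \emph{equal}, both $-\tfrac{2}{3}(1-z^2)^{3/2}$, not negatives of one another; since only their squares enter $h$, your formula survives.)

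The real problem is the final step. You observe that $(1-z^2)^3$ is \emph{largest} at $z=0$ while $\tfrac{1}{M(z)}+\tfrac{1}{M(-z)}$ is \emph{smallest} at $z=0$, and then conclude $h(z)\le h(0)$. But those two observations pull in opposite directions: one factor decreases as $|z|$ grows and the other increases, so nothing about their product follows. Concretely, with $M(z)+M(-z)=\pi$ one has $h(z)=\dfrac{4\pi}{9}\cdot\dfrac{(1-z^2)^3}{M(z)M(-z)}$, and both numerator and denominator are maximal at $z=0$; the inequality $h(z)\le h(0)$ is then equivalent to
\[
\pi^2(1-z^2)^3\ \le\ 4\,M(z)\,M(-z),
\]
which is true but not for free. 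You still need a quantitative comparison---e.g.\ a derivative computation showing $3z\,M(z)M(-z)\ge (1-z^2)^{3/2}\big(M(z)-M(-z)\big)$ for $z\in(0,1)$---and at that point the work is comparable to the paper's sign analysis of $g'$. So the approach is promising and arguably cleaner in its setup, but the proof as written has a genuine gap at exactly the crux.
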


\begin{proof}
Let $D$ denote the unit disk in the $xy$-plane, and let $R_z$ denote the portion of this disk satisfying $x\leq z$.
We start by finding a more convenient expression for the function we are minimizing:
\begin{align*}
I(z)+I(-z)
&=\iint_{R_z}\Big(y^2+\big(x-\overline{x}(z)\big)^2\Big)dxdy+\iint_{R_{-z}}\Big(y^2+\big(x-\overline{x}(-z)\big)^2\Big)dxdy\\
&=\iint_D(x^2+y^2)dxdy
-2\overline{x}(z)\iint_{R_z}xdxdy+\overline{x}(z)^2M(z)\\
&\qquad\qquad\qquad\qquad-2\overline{x}(-z)\iint_{R_{-z}}xdxdy+\overline{x}(-z)^2M(-z)\\
&=\frac{\pi}{2}-\overline{x}(z)^2M(z)-\overline{x}(-z)^2M(-z)
\end{align*}
Next, the fundamental theorem of calculus gives
\[
\overline{x}(z)=-\frac{2}{3}\cdot\frac{(1-z^2)^{3/2}}{M(z)},
\qquad
\frac{d}{dz}M(z)=2\sqrt{1-z^2}.
\]
These identities allow us to simplify the derivative of our function:
\begin{equation}
\label{eq.derivative}
\frac{d}{dz}\Big(I(z)+I(-z)\Big)
=2\sqrt{1-z^2}\Big(\overline{x}(z)+\overline{x}(-z)\Big)\Big(\overline{x}(z)-\overline{x}(-z)-2z\Big).
\end{equation}
By symmetry, it suffices to show that \eqref{eq.derivative} is nonnegative for $z\in(0,1)$.
To this end, applying Lemma~\ref{lem.center of mass}(b) to
\[
\rho(x)=\left\{\begin{array}{ll}
2\sqrt{1-x^2}&\text{if }x\in[-1,z]\\
0&\text{if }x\in(z,1]
\end{array}\right.
\]
gives that $\overline{x}(z)\leq0$ for every $z\in(-1,1)$.
It remains to show that 
\[
\Big(\overline{x}(z)-z\Big)-\Big(\overline{x}(-z)-(-z)\Big)
=\overline{x}(z)-\overline{x}(-z)-2z
\leq0
\]
for $z\in(0,1)$, and it suffices to show $\overline{x}(z)-z$ is decreasing over $z\in(-1,1)$.
To this end, it is straightforward to compute the derivative:
\[
\frac{d}{dz}\Big(\overline{x}(z)-z\Big)
=3\cdot\frac{\overline{x}(z)^2-z\overline{x}(z)}{1-z^2}-1.
\]
To estimate this quantity, we apply Lemma~\ref{lem.center of mass}(a) to $\rho(x)=2\sqrt{1-x^2}$ over $x\in[-1,z]$ to get $\overline{x}(z)\geq\frac{z-1}{2}$.
Combining with the more trivial bounds $\overline{x}(z)\leq z\leq 1$ then gives
\[
-\frac{1}{2}
\leq\overline{x}(z)-\frac{z}{2}
\leq \frac{z}{2}
\leq\frac{1}{2}.
\]
As such, we complete the square to get
\[
\frac{d}{dz}\Big(\overline{x}(z)-z\Big)
=3\cdot\frac{(\overline{x}-\frac{z}{2})^2-\frac{z^2}{4}}{1-z^2}-1
\leq3\cdot\frac{(\frac{1}{2})^2-\frac{z^2}{4}}{1-z^2}-1
=-\frac{1}{4}
<0,
\]
as desired.
\end{proof}

\begin{lemma}
\label{lem.donsker}
Consider the random variables
\begin{align*}
m^{(n)}&=\min_{t\in[0,1]}(W_1^{(n)}(t)+W_2^{(n)}(t)),
&m=\min_{t\in[0,1]}(W_1(t)+W_2(t)),\\
M^{(n)}&=\max_{t\in[0,1]}(W_1^{(n)}(t)+W_2^{(n)}(t)),
&M=\max_{t\in[0,1]}(W_1(t)+W_2(t)).
\end{align*}
Then $\displaystyle\lim_{n\rightarrow\infty}\operatorname{Pr}\Big(W_1^{(n)}(1)\leq m^{(n)}\leq M^{(n)}\leq W_2^{(n)}(1)\Big)=\operatorname{Pr}\Big(W_1(1)\leq m\leq M\leq W_2(1)\Big)$.
\end{lemma}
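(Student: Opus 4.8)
The plan is to express the four quantities in the event as a continuous functional of the two sample paths, apply Donsker's invariance principle to the pair of walks, and finish with the portmanteau theorem after checking that the limiting law does not charge the relevant boundary.

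First, I would replace the step-function walks $W_i^{(n)}$ by the piecewise-linear interpolations $\tilde W_i^{(n)}\in C[0,1]$ that agree with $W_i^{(n)}$ at the lattice points $\{i/n:0\le i\le n\}$. Because a linear interpolant never under- or overshoots its endpoint values, the sums $W_1^{(n)}+W_2^{(n)}$ and $\tilde W_1^{(n)}+\tilde W_2^{(n)}$ take the same minimum, the same maximum, and the same value at $t=1$; likewise $W_i^{(n)}(1)=\tilde W_i^{(n)}(1)$. Hence the probability in the lemma is unchanged if we work with $(\tilde W_1^{(n)},\tilde W_2^{(n)})$. Now set $\Phi(f,g)=\big(f(1),\ \min_{t\in[0,1]}(f+g)(t),\ \max_{t\in[0,1]}(f+g)(t),\ g(1)\big)$, a map $C[0,1]^2\to\mathbb{R}^4$ that is continuous for the sup norm, since addition, evaluation at $1$, and the min/max functionals are all continuous. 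Donsker's invariance principle gives $\tilde W_i^{(n)}\Rightarrow W_i$ in $C[0,1]$; since the two walks are independent for each $n$ and $W_1,W_2$ are independent, weak convergence of the marginals yields $(\tilde W_1^{(n)},\tilde W_2^{(n)})\Rightarrow(W_1,W_2)$ in $C[0,1]^2$, and the continuous mapping theorem then gives $\Phi(\tilde W_1^{(n)},\tilde W_2^{(n)})\Rightarrow\big(W_1(1),m,M,W_2(1)\big)$.

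Let $E=\{(a,b,c,d)\in\mathbb{R}^4:a\le b\le c\le d\}$, so both sides of the claim equal $\operatorname{Pr}(\Phi(\cdot)\in E)$. Since $E$ is closed, the portmanteau theorem reduces the lemma to proving $\operatorname{Pr}\big((W_1(1),m,M,W_2(1))\in\partial E\big)=0$. As $\partial E\subseteq\{a=b\}\cup\{b=c\}\cup\{c=d\}$, it suffices to show $\operatorname{Pr}(W_1(1)=m)=\operatorname{Pr}(m=M)=\operatorname{Pr}(M=W_2(1))=0$. If $m=M$ then $W_1+W_2$ is constant; but $W_1+W_2$ is $\sqrt2$ times a standard Brownian motion, hence a.s. non-constant, so $\operatorname{Pr}(m=M)=0$. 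For the other two, use the decomposition recorded in Section~3: with $X=(W_1(1)+W_2(1))/\sqrt2$, $Z=(W_1(1)-W_2(1))/\sqrt2$, and Brownian bridges $B_k(t)=W_k(t)-tW_k(1)$, the process $W(t)=(B_1(t)+B_2(t))/\sqrt2+tX$ is a standard Brownian motion independent of $Z$, and $W_1(t)+W_2(t)=(B_1(t)+B_2(t))+t(W_1(1)+W_2(1))=\sqrt2\,W(t)$, so $m=\sqrt2\,L$ and $M=\sqrt2\,U$ with $L=\min_{t}W(t)$ and $U=\max_{t}W(t)$. Then $W_1(1)=m$ is equivalent to $Z=2L-W(1)$ and $M=W_2(1)$ is equivalent to $Z=W(1)-2U$; in each case the right-hand side is a measurable function of the path $W$, which is independent of the atomless random variable $Z$, so conditioning on $W$ makes each probability zero.

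Apart from this boundary computation, every step is standard invariance-principle bookkeeping (the agreement of $\Phi$ on the step and interpolated walks, joint weak convergence from independence of the marginals, continuity of $\Phi$, and the portmanteau reduction). The one point that needs genuine care is ruling out the boundary, and the observation that makes it effortless is that $Z$ is independent of the Brownian motion $W$ — and hence of $L$, $U$, and $W(1)$ — which converts ``a path minimum equals a prescribed random level'' into ``an atomless variable hits a value determined by independent data,'' an event of probability zero.
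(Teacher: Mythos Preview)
Your proof is correct and follows essentially the same route as the paper: invoke Donsker, push through a continuous functional to get convergence in distribution of the four-tuple, and then verify via the $(X,Z)$ decomposition that the limiting law does not charge $\partial E$. The only cosmetic difference is that the paper concatenates the two walks into a single process on $[0,2]$ and applies Donsker once, whereas you work on $C[0,1]^2$ and pass to the joint law via independence; both reductions are standard and the boundary analysis is identical in substance.
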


\begin{proof}
Consider the stochastic processes
\[
V^{(n)}(t)
=\left\{\begin{array}{ll}
W_1^{(n)}(t)&\text{if }t\in[0,1]\\
W_1^{(n)}(1)+W_2^{(n)}(t-1)&\text{if }t\in(1,2],
\end{array}\right.
\quad
V(t)
=\left\{\begin{array}{ll}
W_1(t)&\text{if }t\in[0,1]\\
W_1(1)+W_2(t-1)&\text{if }t\in(1,2].
\end{array}\right.
\]
Donsker's invariance principle gives that $V^{(n)}(\cdot)$ converges in distribution to the standard Brownian motion $V(\cdot)$.
From this, we may conclude that $(W_1^{(n)}(1),m^{(n)},M^{(n)},W_2^{(n)}(1))$ converges in distribution to $(W_1(1),m,M,W_2(1))$.
To obtain the desired result, it suffices to show that $S=\{(a,b,c,d):a\leq b\leq c\leq d\}$ is a continuity set of $(W_1(1),m,M,W_2(1))$, that is, that
\[
\operatorname{Pr}\Big((W_1(1),m,M,W_2(1))\in \partial S\Big)=0,
\]
where $\partial S$ denotes the boundary of $S$ in the standard topology of $\mathbb{R}^4$.
Observe that
\[
\partial S
=\Big\{(a,b,c,d):a=b ~\vee~ b=c ~\vee~ c=d\Big\},
\]
and so the union bound gives
\[
\operatorname{Pr}\Big((W_1(1),m,M,W_2(1))\in \partial S\Big)
\leq\operatorname{Pr}\Big(W_1(1)=m\Big)+\operatorname{Pr}\Big(m=M\Big)+\operatorname{Pr}\Big(M=W_2(1)\Big).
\]
First, $(W_1(\cdot)+W_2(\cdot))/\sqrt{2}$ is standard Brownian motion, which almost surely takes strictly positive values and strictly negative values on $(0,1)$, and so $m<M$ with probability one.
Below, we demonstrate $\operatorname{Pr}(W_1(1)=m)=0$, and a similar argument gives $\operatorname{Pr}(M=W_2(1))=0$, thereby producing the result.

Consider Brownian bridges $B_k(t)=W_k(t)-tW_k(1)$ and observe that $(W_1(1),B_1(\cdot),W_2(1),B_2(\cdot))$ are independent, with $W_1(1)$ and $W_2(1)$ exhibiting standard normal distribution.
Put
\[
X=\frac{W_1(1)+W_2(1)}{\sqrt{2}},
\qquad
Z=\frac{W_1(1)-W_2(1)}{\sqrt{2}}.
\]
Then $X$ and $Z$ have standard normal distribution with $(X,Z,B_1(\cdot),B_2(\cdot))$ independent.
In particular, $Z$ is independent of $W_1(\cdot)+W_2(\cdot)$.
Conditioning on $W_1(\cdot)+W_2(\cdot)$ then gives
\begin{align*}
\operatorname{Pr}\Big(W_1(1)=m\Big)
&=\mathbb{E}\operatorname{Pr}\Big(W_1(1)=m~\Big|~W_1(\cdot)+W_2(\cdot)\Big)\\
&=\mathbb{E}\operatorname{Pr}\Big(X+Z=\sqrt{2}m~\Big|~W_1(\cdot)+W_2(\cdot)\Big)
=0,
\end{align*}
where the last step follows from the fact that $W_1(\cdot)+W_2(\cdot)$ determines $X$ and $m$, while the distribution of $Z$ is continuous.
\end{proof}

\begin{lemma}
\label{lem.how to use trivariate}
Let $L$, $U$ and $X$ be random variables for which there exists a continuous function $f$ such that
\[
\operatorname{Pr}\Big(a\leq L\leq U\leq b \wedge X\in[c,d)\Big)
=\int_c^d f(a,b,x)dx
\qquad
\forall a,b,c,d\in\mathbb{R},~a<b,~c<d.
\]
Then for any continuous functions $a$ and $b$ with $a<b$ pointwise, we have
\[
\operatorname{Pr}\Big(a(X)\leq L\leq U\leq b(X) \wedge X\in[c,d)\Big)
=\int_c^d f(a(x),b(x),x)dx
\qquad
\forall c,d\in\mathbb{R},~c<d.
\]
\end{lemma}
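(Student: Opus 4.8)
The plan is to squeeze the event with the continuous thresholds $a(X)$ and $b(X)$ between events with piecewise-constant thresholds, to which the hypothesis applies directly. Fix $c<d$. Since $a$ and $b$ are continuous with $a<b$ everywhere, the number $\delta=\min_{x\in[c,d]}\big(b(x)-a(x)\big)$ is strictly positive, and $a,b$ are bounded on $[c,d]$; say $\alpha_-\le a\le\alpha_+$ and $\beta_-\le b\le\beta_+$ there. Being continuous, $f$ is uniformly continuous on the compact box $K=[\alpha_-,\alpha_+]\times[\beta_-,\beta_+]\times[c,d]$, and this is the only place compactness will be used.

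Next I would fix a partition $c=x_0<x_1<\dots<x_N=d$ whose mesh is small enough that $a$ oscillates by less than $\delta$ on each $[x_i,x_{i+1}]$, and set $\underline a_i=\min_{[x_i,x_{i+1}]}a$, $\overline a_i=\max_{[x_i,x_{i+1}]}a$, and likewise $\underline b_i,\overline b_i$; then $\overline a_i<\underline b_i$. Splitting $\{X\in[c,d)\}$ into the disjoint pieces $\{X\in[x_i,x_{i+1})\}$ and using $\underline a_i\le a(X)\le\overline a_i$ and $\underline b_i\le b(X)\le\overline b_i$ on the $i$-th piece, the monotonicity of the event $\{a\le L\le U\le b\}$ in the thresholds (a smaller lower threshold and a larger upper threshold enlarge the event) gives, for each $i$,
\begin{align*}
\operatorname{Pr}\Big(\overline a_i\le L\le U\le\underline b_i\wedge X\in[x_i,x_{i+1})\Big)
&\le\operatorname{Pr}\Big(a(X)\le L\le U\le b(X)\wedge X\in[x_i,x_{i+1})\Big)\\
&\le\operatorname{Pr}\Big(\underline a_i\le L\le U\le\overline b_i\wedge X\in[x_i,x_{i+1})\Big).
\end{align*}
Applying the hypothesis to the two outer terms (valid because $\overline a_i<\underline b_i$, and also $\underline a_i<\overline b_i$, while $x_i<x_{i+1}$) and summing over $i$ shows that $\operatorname{Pr}\big(a(X)\le L\le U\le b(X)\wedge X\in[c,d)\big)$ lies between $\sum_i\int_{x_i}^{x_{i+1}}f(\overline a_i,\underline b_i,x)\,dx$ and $\sum_i\int_{x_i}^{x_{i+1}}f(\underline a_i,\overline b_i,x)\,dx$.

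Finally I would let the mesh tend to $0$. By uniform continuity of $a$ and $b$ on $[c,d]$, the step values $\underline a_i,\overline a_i,\underline b_i,\overline b_i$ converge to $a(x),b(x)$ uniformly for $x\in[x_i,x_{i+1}]$, with error controlled by their moduli of continuity; since all arguments stay inside $K$, uniform continuity of $f$ then forces both $f(\overline a_i,\underline b_i,x)$ and $f(\underline a_i,\overline b_i,x)$ to converge uniformly to $f\big(a(x),b(x),x\big)$. Hence both sandwiching sums converge to $\int_c^d f\big(a(x),b(x),x\big)\,dx$, and the identity follows. I expect no real obstacle here: the work is entirely bookkeeping — keeping the arguments of $f$ inside the fixed compact $K$ so that uniform continuity applies, and choosing the mesh fine enough to maintain $\overline a_i<\underline b_i$ so that the hypothesis may be invoked.
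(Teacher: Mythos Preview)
Your proposal is correct and follows essentially the same approach as the paper: partition $[c,d)$ finely, squeeze the variable-threshold event between constant-threshold events using the extrema of $a$ and $b$ on each subinterval, apply the hypothesis to the outer events, and pass to the limit via uniform continuity of $f$ (composed with $a,b$) on a compact set. If anything, you are slightly more careful than the paper in explicitly ensuring $\overline a_i<\underline b_i$ before invoking the hypothesis.
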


\begin{proof}
For each $h$, consider all possible partitions of $[c,d)$ into finitely many half-open intervals $\{I_i\}$ of length less than $h$.
In each interval $I_i$, consider all possible choices of $x_i^*,x_i^{**}\in \overline{I_i}$.
We claim that the following string of equalities hold:
\begin{align}
\lim_{h\rightarrow0}\sum_i\operatorname{Pr}\Big(a(x_i^*)\leq L\leq U\leq b(x_i^{**}) \wedge X\in I_i\Big)
\label{3}&=\lim_{h\rightarrow0}\sum_i\int_{I_i}f(a(x_i^*),b(x_i^{**}),x)dx\\
\label{4}&=\lim_{h\rightarrow0}\sum_i\int_{I_i}f(a(x),b(x),x)dx\\
\label{5}&=\int_c^d h(a(x),b(x),x)dx.
\end{align}
Indeed, \eqref{3} holds by assumption, \eqref{4} follows from the uniform continuity of $(s,t,x)\mapsto f(a(s),b(t),x)$ over $s,t,x\in[c,d]$, and \eqref{5} follows from the fact that the $I_i$'s partition $[c,d)$.
Next, define
\[
x_i^{*(1)}:=\arg\max_{x\in\overline{I_i}}a(x),
\quad
x_i^{**(1)}:=\arg\min_{x\in\overline{I_i}}b(x),
\quad
x_i^{*(2)}:=\arg\min_{x\in\overline{I_i}}a(x),
\quad
x_i^{**(2)}:=\arg\max_{x\in\overline{I_i}}b(x).
\]
We obtain the following estimates:
\begin{align}
\operatorname{Pr}\Big(a(x_i^{*(1)})\leq L\leq U\leq b(x_i^{**(1)})\wedge X\in I_i\Big)
\nonumber&\leq \operatorname{Pr}\Big(a(X)\leq L\leq U\leq b(X) \wedge X\in I_i\Big)\\
\label{squeeze}&\leq \operatorname{Pr}\Big(a(x_i^{*(2)})\leq L\leq U\leq b(x_i^{**(2)})\wedge X\in I_i\Big).
\end{align}
At this point, we claim
\begin{align}
\operatorname{Pr}\Big(a(X)\leq L\leq U\leq b(X) \wedge X\in[c,d)\Big)
\label{1}&=\lim_{h\rightarrow0}\sum_i\operatorname{Pr}\Big(a(X)\leq L\leq U\leq b(X) \wedge X\in I_i\Big)\\
\label{2}&=\lim_{h\rightarrow0}\sum_i\operatorname{Pr}\Big(a(x_i^*)\leq L\leq U\leq b(x_i^{**}) \wedge X\in I_i\Big)\\
\label{end}&=\int_c^d f(a(x),b(x),x)dx.
\end{align}
Indeed, \eqref{1} follows from equality in the union bound: the $I_i$'s partition $[c,d)$ so that the events in the union are disjoint.
Finally, \eqref{2} applies the squeeze theorem to \eqref{squeeze}, and \eqref{end} comes from \eqref{5}.
\end{proof}

\begin{lemma}
\label{lem.Ik}
$\displaystyle\sum_{k=-\infty}^\infty I_k=\frac{\pi}{2}.$
\end{lemma}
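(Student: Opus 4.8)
The plan is to perform a linear change of variables that decouples the two quadratic forms appearing in the exponent. First I would substitute $u=a-b$ and $v=a+b$, so that $a=(u+v)/2$ and $b=(v-u)/2$, with Jacobian $da\,db=\tfrac12\,du\,dv$. The region $\{a>0,\ b<0\}$ becomes $\{u>0,\ -u<v<u\}$, and since $b-a=-u$ we have $a+b-2k(b-a)=v+2ku$ while $(a-b)^2=u^2$. This recasts the integral as
\[
I_k=\frac12\int_0^\infty e^{-u^2/2}\int_{-u}^{u}e^{-(v+2ku)^2/2}\,dv\,du.
\]

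Next I would sum over $k$ and interchange the summation with the integrals; this is legitimate by Tonelli's theorem since every term is nonnegative. For fixed $u>0$, the substitution $w=v+2ku$ turns the inner integral into $\int_{(2k-1)u}^{(2k+1)u}e^{-w^2/2}\,dw$, and as $k$ ranges over $\mathbb{Z}$ these intervals are consecutive and of common length $2u$, so they partition $\mathbb{R}$ up to a set of endpoints of measure zero. Hence $\sum_{k\in\mathbb{Z}}\int_{-u}^{u}e^{-(v+2ku)^2/2}\,dv=\int_{-\infty}^\infty e^{-w^2/2}\,dw=\sqrt{2\pi}$ for every $u>0$.

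Substituting this identity back in gives
\[
\sum_{k=-\infty}^\infty I_k=\frac{\sqrt{2\pi}}{2}\int_0^\infty e^{-u^2/2}\,du=\frac{\sqrt{2\pi}}{2}\cdot\frac{\sqrt{2\pi}}{2}=\frac{\pi}{2},
\]
as claimed. There is no serious obstacle here beyond spotting the rotation-and-tiling trick; the only step that warrants a moment's care is the interchange of summation and integration, which Tonelli's theorem dispatches because the integrand is everywhere nonnegative.
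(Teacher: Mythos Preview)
Your argument is correct, modulo a harmless slip in reading the region of integration: in the paper's notation $\int_0^\infty\int_{-\infty}^0(\cdots)\,da\,db$ the inner variable is $a\in(-\infty,0)$ and the outer is $b\in(0,\infty)$ (this is forced by the context in Section~3, where $f(a,b,\cdot)$ vanishes unless $a\le 0\le b$), so the domain is $\{a<0,\ b>0\}$ rather than $\{a>0,\ b<0\}$. Under your substitution this becomes $\{u<0,\ u<v<-u\}$; after the shift $w=v+2ku$ the resulting intervals still tile $\mathbb{R}$, and in effect your formula computes $I_{-k}$ in place of $I_k$, which is immaterial once you sum over all $k\in\mathbb{Z}$.

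Your route differs from the paper's. The paper applies a $k$-\emph{dependent} linear change of variables that diagonalizes the exponent as $(-ka+(k-1)b)^2+(-(k+1)a+kb)^2$, turning each $I_k$ into $\iint_{R_k}e^{-(x^2+y^2)}\,dx\,dy$ over a cone $R_k$ generated by $(k,k+1)$ and $(k-1,k)$; these cones tile the half-plane $\{x<y\}$, whence $\sum_k I_k=\iint_{x<y}e^{-(x^2+y^2)}\,dx\,dy=\pi/2$. You instead make a single $k$-\emph{independent} rotation $(a,b)\mapsto(u,v)$, pushing all of the $k$-dependence into a one-dimensional translation of $v$; for each fixed $u$ the translated intervals tile $\mathbb{R}$ and the inner sum collapses to $\sqrt{2\pi}$. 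Your argument is more elementary---no need to spot the $k$-dependent diagonalization---and reduces immediately to a product of one-dimensional Gaussian integrals; the paper's two-dimensional cone tiling is geometrically more vivid and makes the value $\pi/2$ appear as the Gaussian mass of a half-plane.
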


\begin{proof}
Recall $I_k=\int_0^\infty\int_{-\infty}^0e^{-[(a+b-2k(b-a))^2+(a-b)^2]/2}dadb$, and diagonalize the exponent:
\[
\frac{1}{2}\Big[\Big(a+b-2k(b-a)\Big)^2+(a-b)^2\Big]
=\Big(-ka+(k-1)b\Big)^2+\Big(-(k+1)a+kb\Big)^2.
\]
This motivates the change of variables $(a,b)\mapsto(x,y)$ given by
\[
\left[\begin{array}{c}x\\y\end{array}\right]
=\left[\begin{array}{cc}-k&k-1\\-(k+1)&k\end{array}\right]\left[\begin{array}{c}a\\b\end{array}\right].
\]
Observe that the above matrix has determinant $-1$ (as does its inverse).
As such, we have
\[
I_k=\iint_{R_k}e^{-(x^2+y^2)}dxdy,
\]
where $R_k$ denotes the cone generated by $(k,k+1)$ and $(k-1,k)$.
Notice that
\[
\bigcup_{k=-\infty}^\infty R_k
=\big\{(x,y):x<y\big\}\cup\big\{(0,0)\big\},
\]
and furthermore, $R_i\cap R_j$ has measure zero whenever $i\neq j$.
As such, the desired series combines integrals into a computable one:
\[
\sum_{k=-\infty}^\infty I_k
=\int_{-\infty}^\infty\int_{-\infty}^y e^{-(x^2+y^2)}dxdy
=\int_{\pi/4}^{5\pi/4}\int_0^\infty e^{-r^2}rdrd\theta
=\frac{\pi}{2}.
\qedhere
\]
\end{proof}

\begin{lemma}
\label{lem.Jk}
$\displaystyle\sum_{k=-\infty}^\infty J_k=\frac{\pi}{2}\tanh\Big(\frac{\pi}{2}\Big).$
\end{lemma}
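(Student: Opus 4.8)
The plan is to follow the template of Lemma~\ref{lem.Ik}: simplify the exponent by a linear change of variables, reduce $J_k$ to an elementary integral, and then sum the resulting series in closed form. The key preliminary observation is that, in contrast to the $I_k$ case, the exponent here is degenerate. Indeed,
\[
a-b-2k(b-a) = (a-b) + 2k(a-b) = (2k+1)(a-b),
\]
so that
\[
\tfrac12\Big[\big(a-b-2k(b-a)\big)^2 + (a-b)^2\Big] = \frac{(2k+1)^2+1}{2}\,(a-b)^2 .
\]
This quadratic form in $(a,b)$ has rank one, so there is no invertible, determinant-$\pm1$ substitution turning the integrand into $e^{-(x^2+y^2)}$ as in Lemma~\ref{lem.Ik}; instead one integrates out the "null direction" directly.

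Concretely, I would change variables $(a,b)\mapsto(a,u)$ with $u=a-b$; this has Jacobian $1$, and the region $\{a>0,\ b<0\}$ becomes $\{u>0,\ 0<a<u\}$ (since $b=a-u<0$ forces $a<u$, after which $b<0$ is automatic). Hence
\[
J_k = \int_0^\infty\!\!\Big(\int_0^u da\Big)\, e^{-\frac{(2k+1)^2+1}{2}u^2}\,du
= \int_0^\infty u\, e^{-\frac{(2k+1)^2+1}{2}u^2}\,du
= \frac{1}{(2k+1)^2+1},
\]
using $\int_0^\infty u\, e^{-cu^2}\,du = \tfrac1{2c}$.

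It remains to evaluate $\sum_{k\in\mathbb Z} \frac{1}{(2k+1)^2+1}$. As $k$ ranges over $\mathbb Z$, $2k+1$ ranges over all odd integers, so the series converges absolutely (its terms are $O(k^{-2})$) and equals $2\sum_{n\ge 0}\frac{1}{(2n+1)^2+1}$. I would obtain the closed form from the Mittag--Leffler (partial-fraction) expansion
\[
\tanh z = \sum_{n=0}^\infty \frac{8z}{(2n+1)^2\pi^2 + 4z^2},
\]
which follows from the fact that $\tanh$ has simple poles at $z=i\pi(n+\tfrac12)$, each with residue $1$, together with the pairing of $\pm$ poles. Setting $z=\pi/2$ gives $\tanh(\pi/2) = \frac4\pi\sum_{n\ge0}\frac{1}{(2n+1)^2+1}$, and doubling yields $\sum_{k\in\mathbb Z}\frac{1}{(2k+1)^2+1} = \frac\pi2\tanh(\pi/2)$, as claimed. (Equivalently, one can use $\sum_{n\in\mathbb Z}\frac1{n^2+1} = \pi\coth\pi$, subtract the even-index terms $\tfrac14\sum_{m\in\mathbb Z}\frac{1}{m^2+1/4} = \tfrac\pi2\coth(\pi/2)$, and simplify via $\coth\pi = \tfrac12\big(\coth(\pi/2)+\tanh(\pi/2)\big)$.)

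There is no serious obstacle here: the only non-elementary ingredient is the classical partial-fraction identity for $\tanh$ (or $\coth$), which may be cited or derived in one line from residues. The one place to exercise care is the bookkeeping of the substitution region $\{u>0,\ 0<a<u\}$, since it is precisely the factor of $u$ (the length of the $a$-slice) that produces the summand $\tfrac1{(2k+1)^2+1}$.
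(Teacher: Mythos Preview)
Your argument is correct. One small bookkeeping slip: in the paper's notation $\int_0^\infty\int_{-\infty}^0\cdots\,da\,db$ the region is $\{a<0,\ b>0\}$, not $\{a>0,\ b<0\}$ as you wrote; but since the integrand depends only on $(a-b)^2$, the two integrals coincide and your value $J_k=1/[(2k+1)^2+1]$ is right.

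The paper's proof follows the same template for evaluating $J_k$ --- it uses the rotated coordinates $x=b-a$, $y=-a-b$ rather than your $u=a-b$, arriving at the equivalent $J_k=1/(4k^2+4k+2)$ --- but sums the resulting series by a different route: a complex partial-fraction decomposition followed by the digamma reflection formula $\psi(1-z)-\psi(z)=\pi\cot\pi z$ at $z=\tfrac{1+i}{2}$. Your appeal to the Mittag--Leffler expansion of $\tanh$ (or the equivalent $\coth$ identity) is more direct and avoids the digamma machinery; the paper's approach, by contrast, casts the sum as a routine instance of a very general technique.
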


\begin{proof}
Recall $J_k=\int_0^\infty\int_{-\infty}^0e^{-[(a-b-2k(b-a))^2+(a-b)^2]/2}dadb$, and simplify the exponent:
\[
\frac{1}{2}\Big[\Big(a-b-2k(b-a)\Big)^2+(a-b)^2\Big]
=(2k^2+2k+1)(-a+b)^2.
\]
This motivates the change of variables $x=-a+b$, $y=-a-b$:
\[
J_k
=\int_0^\infty\int_{-x}^x e^{-(2k^2+2k+1)x^2}\cdot\frac{1}{2}dydx
=\frac{1}{4k^2+2k+2}.
\]
At this point, a partial fractions decomposition gives
\begin{align*}
\sum_{k=-\infty}^\infty J_k
&=\sum_{k=-\infty}^\infty\frac{1}{4k^2+2k+2}\\
&=2\sum_{k=0}^\infty\frac{1}{4k^2+2k+2}\\
&=2\sum_{k=0}^\infty\bigg(\frac{i/4}{k+\frac{1+i}{2}}-\frac{i/4}{k+\frac{1-i}{2}}\bigg)\\
&=\frac{i}{2}\Bigg\{\bigg[-\gamma+\sum_{k=0}^\infty\bigg(\frac{1}{k+1}-\frac{1}{k+\frac{1-i}{2}}\bigg)\bigg]-\bigg[-\gamma+\sum_{k=0}^\infty\bigg(\frac{1}{k+1}-\frac{1}{k+\frac{1+i}{2}}\bigg)\bigg]\Bigg\}\\
&=\frac{i}{2}\bigg(\psi\Big(\frac{1-i}{2}\Big)-\psi\Big(\frac{1+i}{2}\Big)\bigg),
\end{align*}
where $\gamma$ denotes the Euler--Mascheroni constant, and $\psi(\cdot)$ is the digamma function defined in terms of the gamma function by $\psi(x)=\frac{d}{dx}\log(\Gamma(x))$.
Indeed, the last equality above follows from~\cite{AbramowitzS:72}.
The digamma function satisfies the following reflection formula~\cite{AbramowitzS:72}:
\[
\psi(1-z)-\psi(z)=\pi\cot\pi z,
\qquad
z\in\mathbb{C}.
\]
Taking $z=\frac{1+i}{2}$ then gives
\[
\sum_{k=-\infty}^\infty J_k
=\frac{i}{2}\bigg(\psi\Big(\frac{1-i}{2}\Big)-\psi\Big(\frac{1+i}{2}\Big)\bigg)
=\frac{i}{2}\cdot\pi\cot\Big(\pi\cdot\frac{1+i}{2}\Big)
=\frac{\pi}{2}\tanh\Big(\frac{\pi}{2}\Big).
\qedhere
\]
\end{proof}

\section*{Acknowledgments}
DGM was partially supported by AFOSR F4FGA06060J007 and AFOSR Young Investigator Research Program award F4FGA06088J001.
The views expressed in this article are those of the authors and do not reflect the official policy or position of the authors' employers, the United States Air Force, Department of Defense, or the U.S.\ Government.


\begin{thebibliography}{WW}

\bibitem{AbramowitzS:72}
M.\ Abramowitz, I.\ A.\ Stegun, eds.,
"6.3 psi (Digamma) Function,"
Handbook of Mathematical Functions with Formulas, Graphs, and Mathematical Tables,10th ed.,
New York: Dover, 1972, pp.\ 258--259.

\bibitem{BernsteinD:17}
M.\ Bernstein, M.\ Duchin,
A formula goes to court:\ Partisan gerrymandering and the efficiency gap,
Available online: arXiv:1705.10812

\bibitem{ChoiR:13}
B.\ S.\ Choi, J.\ H.\ Roh,
On the trivariate joint distribution of Brownian motion and its maximum and minimum,
Stat.\ Probab.\ Lett.\ 83 (2013) 1046--1053.

\bibitem{Duchin:17}
M.\ Duchin,
Redistricting 101:\ Metrics for Gerrymandering,
\url{https://sites.duke.edu/gerrymandering/files/2017/11/MD-duke.pdf}

\bibitem{Ingraham:14}
C.\ Ingraham,
America's most gerrymandered congressional districts,
Washington Post, May 15, 2014,
\url{https://www.washingtonpost.com/news/wonk/wp/2014/05/15/americas-most-gerrymandered-congressional-districts}

\bibitem{PolsbyR:91}
D.\ D.\ Polsby, R.\ D.\ Popper,
The Third Criterion:\ Compactness as a Procedural Safeguard against Partisan Gerrymandering,
Yale Law Policy Rev.\ 9 (1991) 301--353.

\bibitem{Roeck:61}
E.\ C.\ Roeck,
Measuring compactness as a requirement of legislative apportionment,
Midwest J.\ Political Sci.\ 5 (1961) 70--74.

\bibitem{Smith:14}
J.\ D.\ Smith,
On Democracy?s Doorstep:\ The Inside Story of how the Supreme Court Brought, ``One Person, One Vote'' to the United States,
Hill and Wang, 2014.

\bibitem{StephanopoulosM:15}
N.\ O.\ Stephanopoulos, E.\ M.\ McGhee,
Partisan gerrymandering and the efficiency gap,
Univ.\ Chic.\ Law Rev.\ (2015) 831--900.

\bibitem{Wisconsin:data}
Wisconsin State Legislature Open GIS Data,
2012-2020 WI Election Data with 2017 Wards,
\url{http://data-ltsb.opendata.arcgis.com/datasets/2012-2020-wi-election-data-with-2017-wards}

\end{thebibliography}
\end{document}